\newcolumntype{C}[1]{>{\centering\arraybackslash$}p{#1}<{$}}
\newcommand{\cH}{{\cal H}}
\newcommand{\cS}{{\cal S}}
\newcommand{\cT}{{\cal T}}
\newcommand{\ZZ}{\mathbb{Z}}
\newcommand{\RR}{\mathbb{R}}
\newcommand{\NN}{\mathbb{N}}
\newcommand{\CC}{\mathbb{C}}
\newcommand{\TT}{\mathbb{T}}
\newcommand{\Ab}{{\boldsymbol{A}}}
\newcommand{\Bb}{{\boldsymbol{B}}}
\newcommand{\Db}{{\boldsymbol{D}}}
\newcommand{\Fb}{{\boldsymbol{F}}}
\newcommand{\Hb}{{\boldsymbol{H}}}
\newcommand{\Ib}{{\boldsymbol{I}}}
\newcommand{\Kb}{{\boldsymbol{K}}}
\newcommand{\Nb}{{\boldsymbol{N}}}
\newcommand{\Rb}{{\boldsymbol{R}}}
\newcommand{\Sb}{{\boldsymbol{S}}}
\newcommand{\Tb}{{\boldsymbol{T}}}
\newcommand{\cb}{{\boldsymbol{c}}}
\newcommand{\db}{{\boldsymbol{d}}}
\newcommand{\eb}{{\boldsymbol{e}}}
\newcommand{\fb}{{\boldsymbol{f}}}
\newcommand{\hb}{{\boldsymbol{h}}}
\newcommand{\vb}{{\boldsymbol{v}}}
\newcommand{\yb}{{\boldsymbol{y}}}
\newcommand{\bPhi}{{\boldsymbol{\Phi}}}
\newcommand{\bPsi}{{\boldsymbol{\Psi}}}
\newcommand\bphi{{\boldsymbol \phi}}
\newcommand\bZero{{\boldsymbol 0}}
\newcommand{\tbPhi}{{\widetilde{\boldsymbol{\Phi}}}}
\newcommand{\tbPsi}{\widetilde{\boldsymbol{\Psi}}}
\newcommand{\tAb}{\widetilde{{\boldsymbol{A}}}}
\newcommand{\tBb}{\widetilde{{\boldsymbol{B}}}}
\newcommand{\supp}{{\mathop{\mbox{\rm supp}\,}}}
\newcommand{\spann}{{\mathop{\mbox{\rm span}\,}}}
\newcommand{\tV}{{\widetilde{V}}}
\newcommand{\ntAb}[1]{\tAb^{[#1]}} 
\newcommand{\nAb}[1]{\Ab^{[#1]}}
\newcommand{\ntBb}[1]{\tBb^{[#1]}} 
\newcommand{\nBb}[1]{\Bb^{[#1]}} 
\newcommand{\nDb}[1]{\Db^{#1}} 
\newcommand{\ncb}[1]{\cb^{[#1]}} 
\newcommand{\nHb}[1]{\Hb^{[#1]}} 
\newtheorem{theorem}{Theorem}
\newtheorem{proposition}[theorem]{Proposition}
\newtheorem{definition}[theorem]{Definition}
\begin{document}
\begin{frontmatter}	
\title{A note on Hermite multiwavelets with polynomial and exponential vanishing moments}

\author[MariantoAddress]{Mariantonia Cotronei\corref{mycorrespondingauthor}}
\cortext[mycorrespondingauthor]{Corresponding author}
\ead{mariantonia.cotronei@unirc.it}
\address[MariantoAddress]{DIIES, Universit\`a Mediterranea di Reggio Calabria, Via Graziella, 89122 Reggio Calabria,
	Italy}

\author[NadaAddress]{Nada Sissouno}
\ead{sissouno@forwiss.uni-passau.de}
\address[NadaAddress]{Chair of Digital Image Processing, University of Passau, Innstr. 43,
	94032 Passau, Germany}

\begin{abstract}
	The aim of the paper is to  present Hermite-type multiwavelets satisfying the
	vanishing moment property with respect to elements in the space spanned by exponentials and polynomials. Such functions satisfy a two-scale relation which is level-dependent as well as the corresponding  multiresolution analysis. An important feature of the associated  filters is the possibility of factorizing  their symbols in terms of the so-called cancellation operator. A family of biorthogonal multiwavelet system possessing the above property and obtained from a Hermite subdivision scheme reproducing  polynomial and exponential data is  finally introduced. 
\end{abstract}

\begin{keyword}
	Multiwavelets \sep Hermite subdivision schemes \sep Factorization \sep Annihilator
	\MSC[2010] 65T60 \sep 65D15 \sep  41A05 
\end{keyword}
\end{frontmatter}

\section{Introduction}

It is well-known that \emph{multiwavelets} generalize classical wavelets in the sense that the corresponding multiresolution analysis is generated by translates and dilates of not just one but several functions. These functions can be assembled in a   vector, also known as {\em multi-scaling function}, satisfying a {\em vector refinement equation}, whose coefficients are matrices rather than scalars (see \cite{Keinert} for an overview on the topic). Such generalization can result in some advantages connected to the possibility of constructing bases, for example, with  short support and  high approximation order. Nevertheless, the approximation order properties cannot be exploited directly in practical implementations, because they do not imply a corresponding discrete polynomial preservation/cancellation property on the filters side. This results in combining the discrete multiwavelet transform with computationally costly pre-processing and post-processing steps \cite{BCS,CLS}, unless full-rank filters \cite{CH,CCS2008,CCS2010} or balanced multiwavelets \cite{BCL,LebrunVetterli} are used. Also, except these cases, no easy factorization of the symbol as in the scalar situation can be considered.

This paper deals with multiwavelets of Hermite-type, connected with  multi-scaling function vectors  whose elements satisfy  Hermite conditions. 
In particular, we are interested in multiwavelet filters which provide not only polynomial but also exponential data cancellation.
We thus use a notion of vanishing moment which extends the one usually given, which refers just to polynomials. This generalized property  assures certain compression capabilities of the wavelet system also in the case where the given data exhibit  transcendental features. Wavelets possessing such  property have already been studied for example in \cite{UnserBlu05} in a scalar framework. The vector context offers the advantage of providing a higher number of vanishing moments together with  a  short support. Hermite-type multiwavelets allow, in addition, to express the cancellation property as the factorization  of the wavelet filter in terms of the so-called \emph{annihilator} or  {\em cancellation operator} introduced in \cite{ContiCotroneiSauer15}    in the context of the study  of \emph{Hermite subdivision schemes}. These  are level-dependent schemes acting on vector data representing function values and consecutive derivatives up to  a certain order (see, for example, \cite{CMR,CRU,DynLevin99,DubucMerrien06,MerrienSauer12}). In \cite{ContiCotroneiSauer15,ContiCotroneiSauer16} some conditions have been proved connected to the preservation of elements in the (polynomial and exponential) space spanned by
$\left\{1,x,\ldots,x^{p},e^{\pm\lambda_1 x},\cdots, e^{\pm\lambda_r x}\right\}$, with $p,r\in \NN$. In particular the preservation property allows the factorization of the subdivision operator in terms  of a minimal annihilator.

Our idea is to exploit the  close connection between subdivision schemes and wavelet analysis to study biorthogonal multiwavelet filters of Hermite type, in the sense that the underlying multi-scaling function is associated to Hermite subdivision schemes. 
In particular, 
we show how, given a Hermite subdivision operator based  on a level-dependent mask $\Ab^{[n]}$,  satisfying the 
$V_{d,\Lambda}$-spectral condition, in the sense specified later,
 it is always possible to complete it to a biorthogonal system, where the wavelet filter possess the desired polynomial/exponential cancellation property.
In particular, 
 we focus on a special construction of Hermite-type multiwavelet biorthogonal  systems, based on an  MRA realized from the interpolatory subdivision scheme provided in \cite{ContiCotroneiSauer16}. Such an MRA is generated by a level-dependent vector refinable function which turns out to be a generalization of the well-known Hermite (or finite element) 
  multi-scaling function proposed, for example, by Strang and Strela  in \cite{Strela1995}. 

The paper is organized as follows. In Section~\ref{sec:basics} we fix the notation and present some basic facts about  level-dependent (nonstationary) multiresolution analyses of $L^2(\RR)$ and related discrete wavelet transforms. In Section~\ref{sec:hermite} we provide some details and properties of Hermite subdivision schemes preserving exponential and polynomial data.  A construction of the Hermite multiwavelets from such schemes is proposed in Section~\ref{sec:MRA_fact}, and a factorization result is formulated. Finally,
in Section~\ref{sec:HermiteInterp} we give an example of our construction, 
based on an explicitly given family of Hermite subdivision possessing preservation properties. Some conclusions are drawn in Section~\ref{sec:conclusion}.

\section{Preliminaries and basic facts}\label{sec:basics}
Let $\ell^r(\ZZ)$ and $\ell^{r\times r}(\ZZ)$, respectively, denote the spaces of all vector-valued and matrix-valued sequences  defined  on $\ZZ$. 


A  \emph{level-dependent MRA} of $L^2(\RR)$ is defined as the nested sequence
$V_0\subset V_1\subset \cdots\subset L^2(\RR)$
of spaces each spanned by the dilates and translates of a finite set of functions, which differs from level to level, that is, for $d\in\NN$,
\begin{equation}	\label{eq:primal_MRA}
V_n
:=
\spann\{
\phi_0^{[n]}(2^n\cdot - k),\dots,\phi_d^{[n]}(2^n\cdot - k): k\in\ZZ
\},\quad n\in\NN.
\end{equation}
Nonstationary MRAs, in the scalar case ($d=0$), have been introduced, for example, in \cite{CohenDyn,PitolliACOM}.

For each $n\in \NN$, such functions can be arranged in a column vector
$
\bPhi^{[n]}
:=
[\phi_0^{[n]},\allowbreak \phi_1^{[n]},\allowbreak \dots,\phi_d^{[n]}]^T
$.
The dependency of two vector functions at different levels is given in terms of the
{\em level-dependent two-scale-relation}
\begin{equation}	\label{eq:primal_2_scale}
(\bPhi^{[n-1]})^T=\sum_{k\in\ZZ}(\bPhi^{[n]})^T(2\cdot - k)\nAb{n-1}_k,
\end{equation}
where the matrix-valued sequence $\nAb{n}:=(\nAb{n}_k:k\in\ZZ)\in\ell^{(d+1)\times (d+1)}(\ZZ)$
is called the {\em mask} of $\bPhi^{[n]}$. 

In a {\em biorthogonal}  setting those functions and spaces
play the role of the {\it primal scaling function vectors} and
{\em decomposition spaces}. From the point of view of filter banks the masks
correspond to the {\it low-pass filters} in the decomposition.

Given a second level-dependent MRA $
(\tV_n:n\in\NN)$ generated by $
\tbPhi^{[n]}$
satisfying
\begin{equation*}	
(\tbPhi^{[n-1]})^T=\sum_{k\in\ZZ}(\tbPhi^{[n]})^T(2\cdot - k)\ntAb{n-1}_k
\end{equation*}
for some matrix-valued masks $\ntAb{n}\in\ell^{(d+1)\times (d+1)}(\ZZ)$, then
the spaces $\tV_n$ represent the {\it reconstruction spaces} with
{\it dual scaling function vectors} $\tbPhi^{[n]}$ 
if the following duality relations are satisfied
\begin{equation} \label{eq:scaling_duality}
\langle \bPhi^{[n]},\tbPhi^{[n]}(\cdot+k)\rangle
:=
\int_{\RR}\bPhi^{[n]}(x)(\tbPhi^{[n]})^T(x+k)\,dx =\delta_{k,0}\Ib,
\quad k\in\ZZ.
\end{equation}

Let $W_n$ and $\widetilde{W}_n$ denote the
 \emph{wavelet spaces} at level $n$, that is, the complementary
subspaces of $V_n$ in $V_{n+1}$ and $\tV_n$ in $\tV_{n+1}$,
respectively. Those spaces are generated by the shifts of the components of the 
vector-valued functions
$
\bPsi^{[n]}$ and
$
\tbPsi^{[n]}
$.
 Since, by construction, $W_n\subset V_{n+1}$ and
$\widetilde{W}_n\subset\tV_{n+1}$ there exist two matrix-valued masks
$\nBb{n},\, \ntBb{n}\in\ell^{(d+1)\times (d+1)}(\ZZ)$ such that
\begin{equation*} 
\begin{array}{l}
(\bPsi^{[n]})^T
=
\displaystyle{\sum_{k\in\ZZ}(\bPhi^{[n+1]})^T(2\cdot - k)\nBb{n}_k},
\\
(\tbPsi^{[n]})^T
=
\displaystyle{\sum_{k\in\ZZ}(\tbPhi^{[n+1]})^T(2\cdot - k)\tBb_k^{[n]}.}
\end{array}
\end{equation*}
The masks $\nBb{n},\, \ntBb{n}$ correspond to
{\em high-pass filters} in the filter bank terminology. 

The function vectors $\bPsi^{[n]}$ and $\tbPsi^{[n]}$
represent the  level-dependent  {\em multiwavelets}
associated to the scaling functions
$\bPhi^{[n]}$ and $\tbPhi^{[n]}$ if they fulfill the following biorthogonality 
conditions: 
\begin{eqnarray}	\label{eq:wavelet_duality_0}
&&\langle \bPhi^{[n]},\tbPsi^{[n]}(\cdot+k)\rangle
= 
\langle \tbPhi^{[n]},\bPsi^{[n]}(\cdot+k)\rangle
 \; = \; \bZero,\\	
&&\langle \bPsi^{[n]},\tbPsi^{[n]}(\cdot+k)\rangle
= \delta_{k,0}\Ib,	\label{eq:wavelet_duality_I}
\end{eqnarray}
for $k\in\ZZ$, where $\bZero$ denotes the zero matrix.

For a finitely supported mask 
 $\nAb{n}\in\ell^{r\times r}(\ZZ)$, $r\in\NN$, the
{\em symbol} is defined as the matrix-valued Laurent polynomial
$\nAb{n}(z):=\sum_{k\in\ZZ}\nAb{n}_k\,z^k$,   $z\in\CC$. The  duality relations \eqref{eq:scaling_duality},
\eqref{eq:wavelet_duality_0}, and \eqref{eq:wavelet_duality_I}
can be expressed in terms of the
some conditions on the symbols of the masks on the unit circle $\TT:=\{z\in\CC:\,|z|=1\}$, namely
\begin{equation}\label{eq:biortcond}
\begin{array}{l}
(\ntAb{n})^\sharp(z)\,\nAb{n}(z)
	+(\ntAb{n})^\sharp(-z)\,\nAb{n}(-z) = 2\Ib,\\
(\ntAb{n})^\sharp(z)\,\nBb{n}(z)
	+(\ntAb{n})^\sharp(-z)\,\nBb{n}(-z) = \bZero,\\
(\ntBb{n})^\sharp(z)\,\nAb{n}(z)
	+(\ntBb{n})^\sharp(-z)\,\nAb{n}(-z) = \bZero,\\
(\ntBb{n})^\sharp(z)\,\nBb{n}(z)
	+(\ntBb{n})^\sharp(-z)\,\nBb{n}(-z) =2\Ib,
\end{array}
\end{equation}
where we have used the notation $(\Ab^{[n]})^\sharp(z):=(\Ab^{[n]})^T(z^{-1})$.


Suppose we are now given a function  $f\in V_n\subset L^2(\RR)$.
It can be represented as
$f=\sum_{k\in \ZZ} 
(\bPhi^{[n]})^T(2^{n}\cdot -k) \cb_k^{[n]}
$
for some coefficient sequence $(\cb_k^{[n]}:k\in\ZZ )\in \ell^{d+1}(\ZZ)$. Starting
from such sequence, a recursive  scheme can be derived for computing
all the coefficients involved in the decomposition
\begin{eqnarray*}
	f&=&{\cal P}_{n-1}f+{\cal Q}_{n-1}{f}=
{\cal P}_{n-2}f+{\cal Q}_{n-2}{f}+{\cal Q}_{n-1}{f}=\dots\\
&=&{\cal P}_{n-L}f+{\cal Q}_{n-L}f+{\cal Q}_{n-L+1}f+\dots +{\cal Q}_{n-1}f,
\end{eqnarray*}
where $L>0$ is fixed and ${\cal P}_{j}$, ${\cal Q}_{j}$ represent the projection operators
on the spaces $V_j$, $W_j$ respectively. In fact, for example,
\begin{eqnarray*}
{\cal P}_{n-1}{f}=\sum_{k\in \ZZ} 
(\bPhi^{[n-1]})^T(2^{n-1}\cdot -k) \cb_k^{[n-1]}
\end{eqnarray*}
with
\begin{eqnarray*}
  \cb_k^{[n-1]}&=&\langle (\tilde \bPhi^{[n-1]})^T(2^{n-1}\cdot -k),f\rangle\\
  &=&\langle \sum_{j\in \ZZ}(\tilde \bPhi^{[n]})^T(2^{n}\cdot -2k-j)
	  \tilde \Ab_k^{[n-1]},f\rangle\\
  &=& \sum_{j\in \ZZ}(\tilde \Ab_{j -2k}^{[n-1]})^T
	  \langle (\tilde \bPhi^{[n]})^T(2^{n}\cdot -j),f\rangle\\
  &=& \sum_{j\in \ZZ}(\tilde \Ab_{j -2k}^{[n-1]})^T \cb_j^{[n]}.
\end{eqnarray*}
The wavelet coefficients can be computed analogously. By recursively
applying the formulas, fixing $L\le n$, the \emph{decomposition scheme}
reads as
\begin{equation}\label{eq:dec_scheme}
 \left\{ \begin{array}{l}
  \displaystyle{\cb_k^{[n-\ell]} =
  \sum_{j\in \ZZ}(\tilde\Ab_{j -2k}^{[n-\ell]})^T \cb_j^{[n-\ell+1]}},\\\noalign{\medskip}
	\displaystyle{\db_k^{[n-\ell]} =
	\sum_{j\in \ZZ}(\tilde\Bb_{j -2k}^{[n-\ell]})^T
	\cb_j^{[n-\ell+1]}},\end{array}
	 \quad \ell=1,\dots,L.\right.
\end{equation}
By using similar arguments, one can derive the \emph{reconstruction scheme}
\begin{equation}\label{eq:rec_scheme}
\cb_k^{[n-\ell+1]}= \sum_{j\in \ZZ}\Ab_{k -2j}^{[n-\ell]}
\cb_j^{[n-\ell]}+\sum_{j\in \ZZ}\Bb_{k -2j}^{[n-\ell]} \db_j^{[n-\ell]},
\quad \ell=L,\dots,1.
\end{equation}
The following are equivalent ways to write the decomposition and  
reconstruction formulas, respectively:
{\small 
\begin{equation}\label{eq:dec_scheme_symbol}
{ \left\{\begin{array}{l}
\displaystyle{\cb^{[n-\ell]}(z^2) =\frac 12\left(
(\tilde\Ab^{[n-\ell]})^\sharp(z)\cb^{[n-\ell+1]}(z)+
(\tilde\Ab^{[n-\ell]})^\sharp(-z)\cb^{[n-\ell+1]}(-z)\right)},\\\noalign{\medskip}
\displaystyle{\db^{[n-\ell]}(z^2) =\frac 12\left(
	(\tilde\Bb^{[n-\ell]})^\sharp(z)\cb^{[n-\ell+1]}(z)+
	(\tilde\Bb^{[n-\ell]})^\sharp(-z)\cb^{[n-\ell+1]}(-z)\right)},
\end{array}\right.}
\end{equation}
}
{and}
\begin{equation*}
\cb^{[n-\ell+1]}(z)= \Ab^{[n-\ell]}(z)\,
\cb^{[n-\ell]}(z^2)+\Bb^{[n-\ell]}(z)\,
\db^{[n-\ell]}(z^2).
\end{equation*}

As mentioned, there is a close connection with  vector \emph{subdivision schemes}. In particular, in (\ref{eq:rec_scheme}), the action of the low-pass reconstruction filter $\Ab^{[n]}$ at  each level is nothing else than the action of a \emph{vector subdivision operator} $S_{\Ab^{[n]}}$. 
This allows for efficient constructions   of wavelet systems. In fact, given a subdivision operator satisfying some mild assumptions, the associated mask can be completed to a biorthogonal system. This completion, as we will see, is particularly straightforward if the scheme is interpolatory.


\section{Hermite subdivision  preserving exponentials and polynomials}
\label{sec:hermite}
Since our aim  is to propose an MRA based on  Hermite
subdivision schemes, we recall some basic facts on such schemes, focusing on subdivision preserving exponential and polynomial data.

Let $\Db$ be the diagonal matrix
\begin{equation*}
\Db = 
\left[
\begin{array}{ccccc}
1& 0 & 0 &\cdots & 0\\
0& \frac12 &0&\cdots & 0\\ 
\vdots \\ 
0& 0&   0& \cdots& \frac{1}{2^d}
\end{array}
\right].
\end{equation*}

An Hermite subdivision scheme $S(\Ab^{[n]}:\, n\ge 0)$ consists of the successive applications of level-dependent subdivision operators, which produce, starting from an initial sequence $\cb^{[0]}$,  sequences of sequences as
\begin{equation}  \label{eq:HermSubd0}
\nDb{n+1} \ncb{n+1}_j
=
\sum_{k\in \ZZ} \nAb{n}_{j-2k} \nDb{n} \ncb{n}_{k}=:
\cS_{\nAb{n}} \nDb{n} \ncb{n},\quad n\in \NN,
\end{equation}
with the special assumption  that, at each level, the sequence $\cb^{[n]}$ is related
to the evaluations of some function and its derivatives up to order
$d$ on the grid $2^{-n}\ZZ$.

An Hermite scheme is said to be {\it interpolatory} if 
 $(S_{\Ab^{[n]}} \cb^{[n]})_{2j}=\cb^{[n]}_{j}$, or, on the symbol side,
\begin{equation}\label{eq:interpolatoryscheme}
\nAb{n}(z) + \nAb{n}(-z) = 2\Db.
\end{equation}
In such a situation, all the even-indexed mask coefficients are zero
matrices, except $\nAb{n}_0$, which is $\Db$.

The scheme (\ref{eq:HermSubd0}) is said to be $C^d$-convergent
if for any vector-valued sequence $\fb_0\in \ell^d_\infty(\ZZ)$ and the
corresponding sequence of refinements $\fb_{n+1}=\cS_{\nAb{n}}\fb_n$,
there exists a uniformly continuous vector field
$\bphi:\RR\rightarrow\RR^{d+1}$, such that
\[
 \lim_{n\rightarrow\infty}
 \sup_{\alpha\in\ZZ}|\bphi(2^{-n}\alpha)-\fb_n(\alpha)|_{\infty}
 = 0
\]
with
$\phi_0\in C^d_u(\RR)$ and $\frac{d^j\phi_0}{dx^j}=\phi_j$
for $j=0,\dots,d$.

In case of $C^d$-convergence, the special  choice
of delta sequences
as initial data  produces, in the limit, the so-called
\emph{basic limit function} of the Hermite subdivision scheme, that is
the $(d+1)\times (d+1)$ matrix-valued function $\Fb$ given by
$$
\Fb=\left[
  \begin{array}{cccc}
    \phi_0 & \phi_1 & \dots & \phi_d\\
    \phi_0^\prime & \phi_1^\prime & \dots & \phi_d^\prime\\
    \vdots & & &\\
    \phi_0^{(d)} & \phi_1^{(d)} & \dots & \phi_d^{(d)}
  \end{array}
\right]
$$
with $\phi_j \in  C_u^d (\RR)$, $j=0,\ldots,d$.

In this case, all the schemes
$S(\Ab^{[n]}:\, n\ge \ell)$ for $\ell\ge 0$ are $C^d$-convergent, each
with basic limit function $\Fb^{[\ell]} $, where $\Fb^{[0]} $ coincides
with $\Fb$. Furthermore, similar arguments as in \cite{DubucMerrien06,DynLevin}, show that the functions $\Fb^{[\ell]}$ are related by
the refinement equations
\begin{equation}\label{eq:refinablebasic}
\Fb^{[n-1]}=\sum_k \Db^{-1}\Fb^{[n]}(2\cdot-k)\Ab_k^{[n-1]}.
\end{equation}

The refinement property (\ref{eq:refinablebasic}) is closely connected to the
possibility of considering a level-dependent (nonstationary)  Hermite multiresolution
analysis, where each space $V_n$ is spanned by the translates of the
functions $\phi_0^{[n]}(2^n\cdot),\dots, \phi_d^{[n]}(2^n\cdot)$.

 Recently, in \cite{ContiCotroneiSauer15}, Hermite schemes
preserving elements of the space
\begin{equation*}
V_{p,\Lambda}  = \mbox{\rm span}
\left\{1,x,\ldots,x^{p},e^{\pm\lambda_1 x},\cdots, e^{\pm\lambda_r x}\right\}
\end{equation*}
for $\Lambda:=\{\lambda_1,\dots,\lambda_r\}$ with
$\lambda_j\in \CC$, $j=1,\dots,r$, and $d=p+2r$
have been studied.

In particular, this preservation property has been related to the
factorization of the subdivision operator or, equivalently, of the corresponding symbol in terms of the so-called \emph{annihilator} or \emph{cancellation operator}. Such factorization
 turns out to be useful in deriving preservation and
cancellation properties for the MRA based decomposition and reconstruction
schemes presented in the previous section. 

The polynomial and exponential preservation property is expressed in
terms of the so-called \emph{$V_{p,\Lambda}$-spectral condition}, as
in \cite{ContiCotroneiSauer15}, in the sense that the subdivision
operator $\cS_{\Ab^{[n]}}$ satisfies:
\begin{equation}\label{eq:spectral}
\cS_{{\Ab}^{[n]}} \vb^{[n]}_{f;k} = \vb_{f;k}^{[n+1]},\qquad  f \in
V_{p,\Lambda}, \, n\ge 0,
\end{equation}
where for $f \in C^d (\RR)$ we denote by $\vb_f^{[n]}$ the vector
sequence with
$$	
\vb^{[n]}_{f;k} :=
\left[
\begin{array}{c}
f( 2^{-n} k ) \\ 2^{-n} f'( 2^{-n} k )\\ \vdots \\
2^{-nd} f^{(d)}(2^{-n} k )
\end{array}
\right], \qquad k \in \ZZ.
$$
In terms of symbols, (\ref{eq:spectral}) reads as:
\begin{equation}
\label{eq:spectral_symbol}
\Ab^{[n]}(z) \vb^{[n]}_{f}(z^2) = \vb^{[n+1]}_{f}(z)\end{equation}
with  $\vb^{[n]}_{f}(z) =\sum_k \vb^{[n]}_{f;k} z^k$.

For standard Hermite schemes, i.e., schemes preserving only polynomials,  it has been shown in \cite{MerrienSauer12}
that the  preservation property is related to the factorization
of the symbol in terms of the so-called  \emph{complete Taylor operator} $\cT_p$, whose symbol is given by
\begin{equation*}
\Tb_p(z):=\left[ 
\begin{array}{*5{C{3.9em}}}   
(z^{-1}-1) &-1 & \cdots &-\frac 1{(p-1)!} &-\frac 1{p!}\\
0 & (z^{-1} - 1) &\ddots & \vdots & \vdots\\
\vdots &   &\ddots & -1& \vdots\\[0.5em]
0& \dots  &  & (z^{-1} - 1)& -1\\[0.5em]
0   & \dots & & 0 & (z^{-1}-1)
\end{array}
\right].
\end{equation*}

In \cite{ContiCotroneiSauer15}, a similar result for Hermite schemes
preserving both polynomial and exponential data is given, in terms of the following convolution operator.

\begin{definition}\label{def:cancellation}
The \emph{level-$n$ cancellation  operator}
$\cH^{[n]}_{p,\Lambda} : \ell^{d+1}(\ZZ) \to \ell^{d+1}(\ZZ)$
is defined as a convolution operator satisfying
\begin{equation*}
\left(\cH^{[n]}_{p,\Lambda} \vb^{[n]}_f\right)_j
=
\sum_{k\in \ZZ}\Hb^{[n]}_{p,\Lambda;j-k}\,\vb^{[n]}_{f;k}
=0,
\qquad f\in V_{p,\Lambda},
\end{equation*}
or, equivalently, in terms of symbols
\begin{equation}\label{eq:AnnOp}
\Hb^{[n]}_{p,\Lambda}(z)\,\vb^{[n]}_{f}(z)
=0,
\qquad f\in V_{p,\Lambda}.
\end{equation}
\end{definition}

More specifically, the following theorem has been proved.

\begin{theorem}
If the subdivision operator $\cS_{\Ab^{[n]}}$  satisfies the
$V_{p,\Lambda}$-spectral condition, then there exists a finitely
supported  mask $\Rb^{[n]}\in \ell^{(d+1)\times (d+1)}(\ZZ)$
such that
\begin{equation*}
\cH^{[n+1]}_{p,  \Lambda} \cS_{\Ab^{[n]}} = \cS_{\Rb^{[n]}}
\cH^{[n]}_{p, \Lambda}
\end{equation*}
or, in terms of symbols, 
\begin{equation}\label{eq:factSA}
\Hb_{p,\Lambda}^{[n+1]}(z)\Ab^{[n]}(z)
=
\Rb^{[n]}(z)\Hb^{[n]}_{p,\Lambda}(z^2).
\end{equation}
\end{theorem}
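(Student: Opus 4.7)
The plan is to combine the $V_{p,\Lambda}$-spectral condition with the defining property of the cancellation operator, and then to extract the factor $\Rb^{[n]}$ via a minimality argument.

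First I define $\Mb^{[n]}(z) := \Hb_{p,\Lambda}^{[n+1]}(z)\,\Ab^{[n]}(z)$, which is a Laurent polynomial matrix since both factors are finitely supported. For an arbitrary $f\in V_{p,\Lambda}$, the symbol form \eqref{eq:spectral_symbol} of the spectral condition gives $\Ab^{[n]}(z)\,\vb^{[n]}_f(z^2) = \vb^{[n+1]}_f(z)$, and combining this with \eqref{eq:AnnOp} at level $n+1$ I obtain
\[
\Mb^{[n]}(z)\,\vb^{[n]}_f(z^2) \;=\; \Hb_{p,\Lambda}^{[n+1]}(z)\,\vb^{[n+1]}_f(z) \;=\; 0.
\]
Thus $\Mb^{[n]}$ annihilates every upsampled generating sequence $\vb^{[n]}_f(z^2)$, $f\in V_{p,\Lambda}$.

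Next I appeal to a minimality property of $\Hb_{p,\Lambda}^{[n]}$: any Laurent polynomial matrix $\Mb(z)$ that kills all $\vb^{[n]}_f(z^2)$, $f\in V_{p,\Lambda}$, must admit a decomposition $\Mb(z) = \Rb(z)\,\Hb_{p,\Lambda}^{[n]}(z^2)$ with $\Rb(z)$ itself a Laurent polynomial matrix. Applied to $\Mb^{[n]}$, this produces the desired $\Rb^{[n]}(z)$ satisfying \eqref{eq:factSA}, and finite support of $\Rb^{[n]}$ is automatic from the construction.

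The main obstacle is justifying this minimality step. Formally, the candidate quotient $\Rb^{[n]}(z) = \Mb^{[n]}(z)\,\Hb_{p,\Lambda}^{[n]}(z^2)^{-1}$ is a priori only a rational matrix with possible poles at the roots of $\det\Hb_{p,\Lambda}^{[n]}(z^2)$, and one must show these poles cancel. I would exploit the explicit upper triangular structure of $\Hb_{p,\Lambda}^{[n]}(z)$, whose diagonal entries are products of simple factors of the form $(z^{-1}-1)$ and $(z^{-1}-e^{\pm\lambda_j/2^n})$: at each such root, the local kernel of $\Hb_{p,\Lambda}^{[n]}(z^2)$ is spanned precisely by the values of the corresponding $\vb^{[n]}_f(z^2)$, so the annihilation property of $\Mb^{[n]}$ established above forces the residue at that root to vanish. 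Equivalently, a row-by-row Euclidean division of $\Mb^{[n]}(z)$ against the triangular matrix $\Hb_{p,\Lambda}^{[n]}(z^2)$ yields a Laurent polynomial quotient with zero remainder, completing the proof.
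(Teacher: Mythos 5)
Your first step is correct and is exactly the right opening move: writing $\Mb^{[n]}(z):=\Hb^{[n+1]}_{p,\Lambda}(z)\Ab^{[n]}(z)$ and combining \eqref{eq:spectral_symbol} with \eqref{eq:AnnOp} at level $n+1$ gives $\Mb^{[n]}(z)\,\vb^{[n]}_f(z^2)=0$ for all $f\in V_{p,\Lambda}$. For calibration: the present paper does not prove this theorem at all, it imports it from \cite{ContiCotroneiSauer15}; so the entire weight of your proposal rests on the ``minimality'' step, which you correctly identify as the main obstacle but do not actually close.

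There are three concrete gaps there. First, the minimality of $\cH^{[n]}_{p,\Lambda}$ (as invoked elsewhere in this paper, e.g.\ in Section 4) is a statement about \emph{convolution} annihilators of the sequences $\vb^{[n]}_f$, whereas your $\Mb^{[n]}$ acts in subdivision fashion on the upsampled series $\vb^{[n]}_f(z^2)$; you need the reduction $\Mb^{[n]}(z)=\Mb_e(z^2)+z\,\Mb_o(z^2)$, observe that the even and odd parts of $\Mb^{[n]}(z)\vb^{[n]}_f(z^2)$ cannot cancel each other, conclude $\Mb_e(w)\vb^{[n]}_f(w)=\Mb_o(w)\vb^{[n]}_f(w)=0$, factor each part as $\Rb_{e}(w)\Hb^{[n]}_{p,\Lambda}(w)$ and $\Rb_{o}(w)\Hb^{[n]}_{p,\Lambda}(w)$, and set $\Rb^{[n]}(z)=\Rb_e(z^2)+z\Rb_o(z^2)$. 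Without this, ``dividing by $\Hb^{[n]}_{p,\Lambda}(z^2)$'' (which contains only even powers) does not interact correctly with the odd part of $\Mb^{[n]}$. Second, your pole-cancellation argument handles only simple zeros of $\det\Hb^{[n]}_{p,\Lambda}$: the polynomial block $\Tb_p$ in \eqref{eq:Hlambda1} contributes a zero of multiplicity $p+1$ at $z=1$ (and confluent or small frequencies $2^{-n}\lambda_j$ create further near-multiple zeros), so the rank-one residue argument must be replaced by a higher-order jet argument using all of $1,x,\dots,x^p$ --- this is precisely the nontrivial content of the minimality theorem in \cite{ContiCotroneiSauer15}. Third, $\Hb^{[n]}_{p,\Lambda}$ is \emph{not} upper triangular --- see the nonzero $(3,2)$ entry in \eqref{esempio:H_2} --- only block triangular as in \eqref{eq:Hlambda1}, so the proposed ``row-by-row Euclidean division against the triangular matrix'' is not available as stated. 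The clean repair is to cite the minimality result of \cite{ContiCotroneiSauer15} as a black box and supply the even/odd reduction above; as written, the proposal asserts rather than proves the key divisibility.
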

As shown in \cite{ContiCotroneiSauer15}, 
the level-$n$ cancellation operator can be obtained as
$$
\cH^{[n]}_{p,\Lambda}=\cH_{p,2^{-n}\Lambda},
$$
where $\cH_{p,\Lambda}$ is the unique minimal operator whose symbol
$\Hb_{p,\Lambda} ^*(z) \in \RR^{(d+1)\times(d+1)}$ has the following
structure
\begin{equation}\label{eq:Hlambda1}
\Hb_{p,\Lambda}^*(z)
= \left[ \begin{array}{cc}
\Tb_{p}(z) & \ast \\ 0 & \ast 
\end{array}\right]
\end{equation}
and satisfies
\begin{equation}\label{eq:Hlambda2}
\Hb_{p,\Lambda}^* \left( e^{\mp \lambda} \right)
\left[\begin{array}{c}
1 \\ \pm \lambda \\ \vdots \\ (\pm \lambda)^d
\end{array}\right]
= 0.
\end{equation}
The remaining blocks in (\ref{eq:Hlambda1}) can be explicitly
computed (see \cite{ContiCotroneiSauer15} for details).
As an example, we give the expressions of $ \Hb^*_{p,\Lambda} $
in the cases $p=0$ and $p=1$, considering only one pair of
frequencies $\lambda,-\lambda$:
\begin{equation}\label{esempio:H_2}
\Hb_{0,\{\lambda\}} ^*(z)
=
\left[ \begin {array}{ccc} 
{z}^{-1}-1
&
-\displaystyle{\frac {\sinh \left( \lambda \right) }{\lambda}}
&
\displaystyle{\frac {1-\cosh \left( \lambda \right)}{{\lambda}^{2}}}
\\
\noalign{\medskip}
0
&
{z}^{-1}-\cosh \left( \lambda\right)
&
-\displaystyle{\frac {\sinh \left( \lambda \right) }{\lambda}}
\\
\noalign{\medskip}
0
&
-\lambda\,\sinh \left( \lambda \right) 
&
{z}^{-1}-\cosh \left( \lambda \right) 
\end {array} \right], 
\end{equation}
\begin{equation}\label{esempio:H_3}
\Hb_{1,\{\lambda\}} ^*(z)
=
\left[\begin{array}{cccc}
{z}^{-1}-1
&
-1
&
\displaystyle{\frac{1-\cosh \left(\lambda \right)}{{\lambda}^{2}}}
&
\displaystyle{\frac{\lambda-\sinh\left(\lambda\right)}{{\lambda}^{3}}}
\\
\noalign{\medskip}
0 & & & \\
\noalign{\medskip}
0 & &  \Hb_{0,\{\lambda\}} ^*(z) &   \\
\noalign{\medskip}
0 & & &
\end{array}\right].
\end{equation}
In \cite{ContiCotroneiSauer15}, it has also been proved that the operators
$\cH_{p,2^{-n}\Lambda}$ reduce to Taylor operators as the frequencies tend to zero; as a consequence, the asymptotical behavior of such operators is easily found as
\begin{equation*}
\label{eq:HdLconvTd3}
\lim_{n\to \infty} \cH_{p,2^{-n}\Lambda} = \cT_d,\quad d=p+2\#\Lambda.
\end{equation*}

\section{MRA based on Hermite subdivision}
\label{sec:MRA_fact}
In this section we describe how to build  multiresolution analyses associated to a convergent Hermite  level-dependent scheme, where the subdivision operator plays the role of the reconstruction low-pass filter.
If such operator has the polynomial-exponential preservation property, then the wavelet decomposition filter can be easily constructed in order to cancel elements in the space $V_{p,\Lambda}$.

Before going into the details of the discussion, let us give the  vanishing moment definition for the wavelet filter with respect to the elements in the space $V_{p,\Lambda}$.

\begin{definition}
A level-dependent multiwavelet analysis filter  $\widetilde \Bb^{[n]}$ satisfies the \emph{$V_{p,\Lambda}$-vanishing moment condition} if
\begin{equation} \label{eq:vanmomBT}
	\sum_{j\in \ZZ}(\widetilde\Bb_{j -2k}^{[n]})^T
\vb^{[n+1]}_{f;j}=0, \qquad  f \in
V_{p,\Lambda}, \, n\ge 0,
\end{equation}
\end{definition}

A nice property of such filters is that the symbol can be factorized in a straightforward way. In fact, since 
$ {\cal H}^{[n+1]}_{p,\Lambda}$, as defined in Section \ref{sec:hermite}, is the  minimal (convolution)  annihilator for the elements in $V_{p,\Lambda}$ at the level $n+1$, the following result follows.

\begin{proposition}
The filter  $\widetilde \Bb^{[n]}$ satisfies the $V_{p,\Lambda}$-vanishing moment condition if and only if there exists a finite filter $\Sb^{[n]}\in \ell^{(d+1)\times (d+1)}(\ZZ)$ such that
 \begin{equation}\label{eq:factorBT}
 (\widetilde \Bb^{[n]})^ {\sharp} (z)= \Sb^{[n]}(z) \Hb^{[n+1]}(z).
 \end{equation}	
\end{proposition}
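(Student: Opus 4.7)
My plan is to recast the vanishing moment condition \eqref{eq:vanmomBT} as a symbol identity on $\TT$ and then invoke the minimality of $\cH^{[n+1]}_{p,\Lambda}$ recalled in Section~\ref{sec:hermite}. A direct rearrangement of the product of Laurent series gives
\[
(\widetilde{\Bb}^{[n]})^\sharp(z)\,\vb_f^{[n+1]}(z)=\sum_{m\in\ZZ}z^{m}\sum_{j\in\ZZ}(\widetilde{\Bb}_{j-m}^{[n]})^T\vb_{f;j}^{[n+1]},
\]
so \eqref{eq:vanmomBT} is precisely the vanishing of the coefficients of $z^{2k}$ for every $f\in V_{p,\Lambda}$. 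Since $V_{p,\Lambda}$ is invariant under real translations (integer powers of $x$ translate to polynomials of the same degree, and $e^{\pm\lambda x}$ only picks up a scalar factor), the shifted function $g:=f(\cdot+2^{-(n+1)})$ still lies in $V_{p,\Lambda}$, and a short computation yields $\vb_{g;k}^{[n+1]}=\vb_{f;k+1}^{[n+1]}$. Applying \eqref{eq:vanmomBT} to $g$ therefore forces the coefficients of $z^{2k+1}$ to vanish as well, so \eqref{eq:vanmomBT} is equivalent to the full symbol identity
\[
(\widetilde{\Bb}^{[n]})^\sharp(z)\,\vb_f^{[n+1]}(z)=0,\qquad f\in V_{p,\Lambda}.
\]

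The ``if'' implication is then immediate from \eqref{eq:AnnOp}: under \eqref{eq:factorBT}, associativity gives $(\widetilde{\Bb}^{[n]})^\sharp(z)\vb_f^{[n+1]}(z)=\Sb^{[n]}(z)\bigl(\Hb^{[n+1]}(z)\vb_f^{[n+1]}(z)\bigr)=0$ for every $f\in V_{p,\Lambda}$.

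For the converse, I would invoke the minimality property of $\cH^{[n+1]}_{p,\Lambda}$: among finitely supported matrix convolution operators annihilating the family $\{\vb_f^{[n+1]}:f\in V_{p,\Lambda}\}$, the operator with symbol structure \eqref{eq:Hlambda1}--\eqref{eq:Hlambda2} acts as a generator, so any such annihilator is a left Laurent-polynomial multiple of $\Hb^{[n+1]}$. Applied to $(\widetilde{\Bb}^{[n]})^\sharp$, this produces the desired finite filter $\Sb^{[n]}(z)$ in \eqref{eq:factorBT}. This last step is the main obstacle: one must verify that the quotient $\Sb^{[n]}(z)$ is a genuine matrix Laurent polynomial and not merely a rational matrix. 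This rests on the explicit block structure \eqref{eq:Hlambda1} of $\Hb^{[n+1]}$, and in particular on the fact that its diagonal entries are monic polynomials of degree one in $z^{-1}$, which allows a finite left division in $\RR[z,z^{-1}]^{(d+1)\times(d+1)}$ to be carried out column by column with zero remainder exactly when the annihilation condition holds.
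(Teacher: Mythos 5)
Your proof is correct and follows essentially the same route as the paper, which itself offers no argument beyond appealing to the minimality of the cancellation operator $\cH^{[n+1]}_{p,\Lambda}$ established in \cite{ContiCotroneiSauer15}: the ``if'' direction is immediate from \eqref{eq:AnnOp}, and the ``only if'' direction is exactly the statement that every finitely supported convolution annihilator of $\{\vb^{[n+1]}_f : f\in V_{p,\Lambda}\}$ is a left Laurent-polynomial multiple of the minimal one. Your translation argument upgrading the downsampled condition \eqref{eq:vanmomBT} (vanishing of the even coefficients only) to the full symbol identity $(\widetilde\Bb^{[n]})^\sharp(z)\,\vb^{[n+1]}_f(z)=0$ is a genuine and necessary step that the paper leaves implicit, and it is carried out correctly since $V_{p,\Lambda}$ is translation invariant.
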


One possibility of constructing a biorthogonal multiwavelet analysis filter within an Hermite-type framework, with the property (\ref{eq:vanmomBT}), is by taking the Hermite subdivision operator as reconstruction filter. Since its  symbol satisfies the factorization $\Ab^{[n]}(z)=(\Hb^{[n+1]})^{-1}(z)\Rb(z) \Hb^{[n]}(z^2)$,  if we    impose the factorization  (\ref{eq:factorBT}), then it follows that the third of the   biorthogonality conditions (\ref{eq:biortcond}) is satisfied if and only if $\Sb(z)$ is chosen such that
$$\Sb^{[n]}(z)\Rb^{[n]}(z)+\Sb^{[n]}(-z)\Rb^{[n]}(-z)=0.$$

A good alternative to this kind of procedure, is offered by Hermite interpolatory schemes, which allow an easier construction of an Hermite-type level-dependent MRA.
Let ${\cal S}_{\Ab^{[n]}}$ be the $n$-th level subdivision operator
associated to a $C^d$-convergent interpolatory Hermite subdivision
scheme. 
As stated in Section~\ref{sec:hermite}, in this case, there exists
a sequence of basic matrix limit functions $(\Fb^{[n]}:\: n\ge 0)$,
whose first rows correspond to  vector-valued functions
$
\bPhi^{[n]}=
\big[ \phi_0^{[n]},\phi_1^{[n]},\ldots, \phi_d^{[n]}\big]^T
$
satisfying the refinement relations \eqref{eq:primal_2_scale}
and the Hermite interpolatory conditions
$$\big(\bPhi^{[n]}\big)^{(j)}(k)=\eb_j\delta_{0k}$$
with $\eb_j$ denoting the $j$-th coordinate vector.
As in \eqref{eq:primal_MRA}, they span a level-dependent MRA
$(V_n:n\in\NN)$ for the space $C^d_u$ of  uniformly $C^d$-continuous
functions.

The projection of a generic $f\in C^d_u(\RR)$ on  $V_n$ is defined
in terms of the Hermite interpolant
\begin{equation}\label{eq:projV}
{\cal P}_n f
=
\sum_{k\in \ZZ} \big(\bPhi^{[n]}\big)^T(2^n\cdot -k) \Db^n
\left[ \begin{array}{c}
f(2^{-n}k)\\f'(2^{-n}k)\\\vdots \\ f^{(d)}(2^{-n}k)
\end{array}\right].
\end{equation}
The (multi)wavelet spaces $W_n$ can be defined as the
complementary spaces of $V_n$ in $V_{n+1}$, and, from the
decomposition formula
$$
{\cal P}_{n+1}f
=
{\cal P}_{n}f+( {\cal P}_{n+1}- {\cal P}_{n})f,
$$
we can define the action of projection operator on $W_n$ as
$$
{\cal Q}_n f
=
({\cal P}_n-{\cal P}_{n-1}) f.
$$

It is now easy to find the filters involved in the discrete wavelet
decomposition \eqref{eq:dec_scheme} and the reconstruction
\eqref{eq:rec_scheme} scheme associated to such a MRA.
Let $f\in V_{n+1}=V_{n}\oplus W_{n}$ be given in terms of a
(vector-valued) coefficient sequence  $(\cb^{[n]}_k:k\in \ZZ)$,
that is
$$f=\sum \bphi(2^{n+1}\cdot-k)\cb^{[n+1]}_k.$$
In order to find the coefficient sequence $(\cb^{[n]}_k:k\in \ZZ)$
representing $f$ in $V_{n}$, we just compare the actions of the
projection operators \eqref{eq:projV} on $V_{n}$ and $V_{n+1}$.
We get
$$
\cb^{[n]}_k
=\Db^{n} \left[\begin{array}{c}
f(2^{-n}k)\\f'(2^{-n}k)\\\vdots \\ f^{(d)}(2^{-n}k)
\end{array}\right]
=\Db^{-1}\Db^{n+1} \left[ \begin{array}{c}
f(2^{-n-1}(2k))\\f'(2^{-n-1}(2k))\\\vdots \\ f^{(d)}(2^{-n-1}(2k))
\end{array}\right]
=\Db^{-1} \cb^{[n+1]}_{2k}.
$$
Thus, the low-pass decomposition step consists of just subsampling
the rescaled sequence $(D^{-1}\cb^{[n+1]}_k:k\in \ZZ)$
by a factor of 2.
Using the symbol formalism, this is equivalent to the identity
\begin{equation}
\label{eq:clink}
\cb^{[n]}(z^2)=\frac 12 \Db^{-1}\left(\cb^{[n+1]}(z) +\cb^{[n+1]}(-z) \right).
\end{equation}

In order to find the high-pass wavelet coefficients
$(\db^{[n]}_k:k\in \ZZ)$ involved in the representation of  $f$
in $ W_{n}$, we first observe that, in view of the refinability of
the functions $\bPhi^{[n]}$ and the above formula for the
decomposition step, we have
\begin{eqnarray*}
{\cal P}_{n}f
&=&
\sum_{\ell}\big(\bPhi^{[n]}\big)^T(2^{n}\cdot -\ell)\cb_\ell^{[n]}
\\
&=&
\sum_k \big(\bPhi^{[n+1]}\big)^T(2^{n+1}\cdot -k)
	\sum_{\ell} \Ab^{[n]} _{k-2\ell}\cb_\ell^{[n]}
\\
&=&
\sum_k \big(\bPhi^{[n+1]}\big)^T(2^{n+1}\cdot -k)
	\sum_{\ell} \Ab^{[n]} _{k-2\ell}\Db^{-1}\cb_{2\ell}^{[n+1]}.
\end{eqnarray*} 
Thus, we have
\begin{eqnarray*}
{\cal Q}_{n}f
&=&
({\cal P}_{n+1}-{\cal P}_{n})f
\\
&=&
\sum _k \big(\bPhi^{[n+1]}\big)^T(2^{n+1}\cdot -k)
 	\Big( \cb_k^{[n+1]}- \sum_{\ell} \Ab^{[n]} _{k-2\ell}\Db^{-1}
 			\cb_{2\ell}^{[n+1]}\Big)
\\
&=&
\sum _k \big(\bPhi^{[n+1]}\big)^T(2^{{n+1}}\cdot -k)\db_{k}^{[n]},
\end{eqnarray*}
which produces the following formulas
$$ \db_{2k}^{[n]}=0,$$
\begin{eqnarray*}
\db_{2k+1}^{[n]}
&=&
\cb_{2k+1}^{[n+1]} - \sum_{\ell} \Ab^{[n]} _{2k+1-2\ell}\Db^{-1}
\cb_{2\ell}^{[n+1]}
\\
&=&
\cb_{2k+1}^{[n+1]}-\Big( \sum_{\ell} \Ab^{[n]} _{2k+1-\ell}\Db^{-1}
\cb_{\ell}^{[n]}-\Ab^{[n]}_0 \Db^{-1}\cb_{2k+1}^{[n+1]}\Big)
\\
&=&
2\cb_{2k+1}^{[n+1]}-
\sum_{\ell} \Ab^{[n]} _{2k+1-\ell}\Db^{-1}\cb_{\ell}^{[n+1]}.
\end{eqnarray*} 
So the wavelet coefficients are obtained by means of a convolution
with the filter $2\Ib-\Ab^{[n]} \Db^{-1}$ followed by a shift and
subsampling.

As to the reconstruction part, the coefficients in the finer space
$V_{n+1}$ are easily obtained as a sum of the upsampled (shifted)
wavelet coefficients $\db^{[n]}$ and the coefficients generated
by the subdivision operator ${S_{\Ab^{[n]}}}$ applied  to
$\cb^{[n]}$.

Since in case of interpolatory Hermite subdivision schemes
$\Ab_0^{[n]}=\Db$, the pair of decomposition filters, in terms
of symbols, is given by
\begin{equation}\label{eq:decfilters}
\begin{array}{l}
\tilde \Ab^{[n]}(z)=\Db^{-1},
\\
\tilde \Bb^{[n]}(z)= z (2\Ib -\Db^{-1} (\Ab^{[n]})^\sharp(z))=
    z \Db^{-1} (\Ab^{[n]})^\sharp(-z),
    \end{array}
\end{equation}
while the reconstruction filters are
$$\Ab^{[n]}(z), \quad \Bb^{[n]}(z)=z\Ib.$$
One can easily check that they satisfy the biorthogonality  conditions.
%

The previous arguments allow us to  show that, starting from a Hermite subdivision operator
${S_{\Ab^{[n]}}}$ preserving polynomial/exponential data, one can always find a complete wavelet system where the  high-pass filter involved in the
decomposition has the property of
canceling those polynomials and exponentials.

\begin{proposition}\label{thr:fac_Bt}
Let $\Ab^{[n]}$ be the mask of an Hermite
subdivision scheme satisfying the $V_{p,\Lambda}$-spectral
condition. Using $\Ab^{[n]}$ as low-pass synthesis filter, there
exists a biorthogonal filter bank such that the symbol
of the high-pass analysis filter $\tBb^{[n]}$ satisfies the  $V_{p,\Lambda}$-vanishing moment condition.
\end{proposition}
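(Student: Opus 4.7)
The plan is to exhibit a biorthogonal filter bank with $\Ab^{[n]}$ as low-pass synthesis filter and to check that the chosen high-pass analysis filter inherits the required cancellation property from the $V_{p,\Lambda}$-spectral condition. I would proceed via the interpolatory construction detailed in the paragraphs immediately preceding the proposition, taking as the four filters $\tAb^{[n]}(z) = \Db^{-1}$, $\tBb^{[n]}(z) = z\Db^{-1}(\Ab^{[n]})^\sharp(-z)$, $\Ab^{[n]}(z)$, and $\Bb^{[n]}(z) = z\Ib$. Biorthogonality for this quadruple has already been established, so the only remaining task is to verify the $V_{p,\Lambda}$-vanishing moment condition for $\tBb^{[n]}$.

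For that step I would rewrite (\ref{eq:vanmomBT}) in symbol form, by the same argument that produces (\ref{eq:dec_scheme_symbol}), as the requirement that
\[
(\tBb^{[n]})^\sharp(z)\,\vb^{[n+1]}_{f}(z) + (\tBb^{[n]})^\sharp(-z)\,\vb^{[n+1]}_{f}(-z) = 0 \qquad \text{for all } f\in V_{p,\Lambda}.
\]
From (\ref{eq:decfilters}) one computes $(\tBb^{[n]})^\sharp(z) = z^{-1}\Ab^{[n]}(-z)\Db^{-1}$, while the symbol form of the spectral condition (\ref{eq:spectral_symbol}) gives $\vb^{[n+1]}_{f}(\pm z) = \Ab^{[n]}(\pm z)\,\vb^{[n]}_{f}(z^2)$. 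Substituting these into the above, the identity reduces to showing that the commutator $\Ab^{[n]}(-z)\Db^{-1}\Ab^{[n]}(z) - \Ab^{[n]}(z)\Db^{-1}\Ab^{[n]}(-z)$ vanishes. This follows in one line from the interpolatory relation $\Ab^{[n]}(-z) = 2\Db - \Ab^{[n]}(z)$ in (\ref{eq:interpolatoryscheme}), since both products expand to the common expression $2\Ab^{[n]}(z) - \Ab^{[n]}(z)\Db^{-1}\Ab^{[n]}(z)$.

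For a general (non-interpolatory) Hermite mask satisfying the spectral condition, the analogous argument is less direct: one would seek $\tBb^{[n]}$ in the factorized form $(\tBb^{[n]})^\sharp(z) = \Sb^{[n]}(z)\,\Hb^{[n+1]}_{p,\Lambda}(z)$ of (\ref{eq:factorBT}), and invoke the spectral factorization (\ref{eq:factSA}) to rewrite the third biorthogonality condition in (\ref{eq:biortcond}) as $\Sb^{[n]}(z)\Rb^{[n]}(z) + \Sb^{[n]}(-z)\Rb^{[n]}(-z) = 0$. The main obstacle in that setting is exhibiting such an $\Sb^{[n]}$ together with matching $\tAb^{[n]}$ and $\Bb^{[n]}$ that complete a full biorthogonal filter bank; the interpolatory route sketched above avoids this completion problem entirely by making all four filters explicit, thereby establishing the existence claim in the form most relevant to the remainder of the paper.
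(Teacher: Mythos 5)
Your proposal is correct and follows essentially the same route as the paper: both rely on the interpolatory prediction--correction filter bank with $(\tBb^{[n]})^\sharp(z)=z^{-1}\Ab^{[n]}(-z)\Db^{-1}$, the interpolatory relation \eqref{eq:interpolatoryscheme}, and the spectral condition \eqref{eq:spectral_symbol}. The only difference is cosmetic -- the paper simplifies $\db^{[n]}(z^2)$ to $z^{-1}\bigl(\cb^{[n+1]}(z)-\Ab^{[n]}(z)\cb^{[n]}(z^2)\bigr)$ and applies the spectral condition once, whereas you substitute it on both terms and check that a commutator vanishes; the two computations are equivalent.
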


\begin{proof}
	The existence of the filter bank is already proven by the
	construction via the ``prediction-correction'' approach by taking $\Ab^{[n]}$ as the mask of an interpolatory Hermite
	subdivision.
	To check  that the analysis wavelet filter annihilates the elements in the space $V_{p,\Lambda}$, we fix $\cb^{[n]}=\vb^{[n]}_{f}$ with $f\in V_{p,\Lambda}$, $n\ge 0$. We  observe that, from (\ref{eq:dec_scheme_symbol}),
	(\ref{eq:decfilters}), (\ref{eq:interpolatoryscheme}) and
	(\ref{eq:clink}),
\begin{eqnarray*}	
	\db^{[n]}(z^2) &=&\frac 1{2z}\left(
		\Ab^{[n]}(-z)\Db^{-1}\cb^{[n+1]}(z)-
		\Ab^{[n]}(z)\Db^{-1}\cb^{[n+1]}(-z)\right)\\
			&=&	\frac 1{2z}\left(
			2 \cb^{[n+1]}(z)-
			\Ab^{[n]}(z)\Db^{-1}(\cb^{[n+1]}(z)+\cb^{[n+1]}(-z))\right)\\
		&=&	\frac 1{z}\left(
			\cb^{[n+1]}(z)  -\Ab^{[n]}(z)
		\cb^{[n]}(z^2)\right),
	\end{eqnarray*}
which is identically zero because of the preservation property (\ref{eq:spectral_symbol}) of $\Ab^{[n]}$.
\end{proof}

It follows that the filter associated to $(\Bb^{[n]}(z))^{\sharp}$ is a cancellation operator for the elements in $V_{p,\Lambda}$ at the level $n+1$. Since
$ {\cal H}^{[n+1]}_{p,\Lambda}$, as defined in Section \ref{sec:hermite}, is the  minimal (convolution)  annihilator for such space, there exists a matrix polynomial $\Sb^{[n]}(z)$ such that
 \begin{equation*}
 (\widetilde \Bb^{[n]})^ {\sharp} (z)= \Sb^{[n]}(z) \Hb^{[n+1]}(z).
 \end{equation*}
The structure of the polynomial $\Sb^{[n]}(z)$  is  easily found. We get
\begin{eqnarray*}
\Sb^{[n]}(z) = -z^{-1}\big(\Hb^{[n+1]}(-z)\big)^{-1}\Rb^{[n]}(-z)\Db^{-1}\Hb^{[n+1]}(-z),
\end{eqnarray*}
where we use \eqref{eq:decfilters}, \eqref{eq:factSA}, and the following
proposition.


\begin{proposition}\label{thr:prop_H}
The symbol of the level-n cancellation operator $\cH^{[n]}_{p,\Lambda}$
satisfies
\[
\Hb^{[n]}(z^2)\Db^{-1}
=
-\Db^{-1}\Hb^{[n+1]}(-z)\Hb^{[n+1]}(z).
\]
\end{proposition}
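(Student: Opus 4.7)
The plan is to exploit the very simple ``first-order'' structure of the symbol $\Hb^{[n]}(z)$. Inspecting the explicit expressions \eqref{esempio:H_2} and \eqref{esempio:H_3}, and more generally the minimal construction of $\Hb^*_{p,\Lambda}$ via \eqref{eq:Hlambda1}--\eqref{eq:Hlambda2} given in \cite{ContiCotroneiSauer15}, one sees that
\[
\Hb^{[n]}(z) \;=\; z^{-1}\Ib - \Cb^{[n]},
\]
where $\Cb^{[n]}\in\CC^{(d+1)\times(d+1)}$ is a constant matrix depending only on $2^{-n}\Lambda$. A direct multiplication then gives
\[
\Hb^{[n+1]}(-z)\,\Hb^{[n+1]}(z)
\;=\; -(z^{-1}\Ib+\Cb^{[n+1]})(z^{-1}\Ib-\Cb^{[n+1]})
\;=\; -z^{-2}\Ib + (\Cb^{[n+1]})^2,
\]
while $\Hb^{[n]}(z^2)\Db^{-1} = z^{-2}\Db^{-1} - \Cb^{[n]}\Db^{-1}$. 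Matching the $z^{-2}$ and constant coefficients, the claimed identity reduces to the purely algebraic statement
\[
\Db\,\Cb^{[n]}\,\Db^{-1} \;=\; (\Cb^{[n+1]})^2.
\]

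To prove this reduced identity I would use the annihilation property \eqref{eq:AnnOp}. Applied to $\vb^{[n]}_f$ for $f\in V_{p,\Lambda}$, it reads $(z^{-1}\Ib-\Cb^{[n]})\,\vb^{[n]}_f(z)=0$, which in the time domain becomes $\vb^{[n]}_{f;k+1}=\Cb^{[n]}\,\vb^{[n]}_{f;k}$ for every $k$; so $\Cb^{[n]}$ acts on the scaled sampling sequences as the unit forward shift, and analogously $(\Cb^{[n+1]})^2$ is the two-step shift at level $n+1$. Directly from the definition of $\vb^{[n]}_{f;k}$ one also has the down-sampling identity $\vb^{[n]}_{f;k}=\Db^{-1}\,\vb^{[n+1]}_{f;2k}$, the factor $\Db^{-1}$ accounting exactly for the different scalings $2^{-nj}$ versus $2^{-(n+1)j}$ in coordinate $j$. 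Combining these observations at $k=0$ yields
\[
\Db^{-1}(\Cb^{[n+1]})^2\,\vb^{[n+1]}_{f;0}
\;=\; \Db^{-1}\vb^{[n+1]}_{f;2}
\;=\; \vb^{[n]}_{f;1}
\;=\; \Cb^{[n]}\vb^{[n]}_{f;0}
\;=\; \Cb^{[n]}\Db^{-1}\,\vb^{[n+1]}_{f;0}.
\]
Since the evaluation-derivative map $f\mapsto\vb^{[n+1]}_{f;0}$ is a linear bijection from the $(d+1)$-dimensional space $V_{p,\Lambda}$ onto $\CC^{d+1}$ (its matrix being, up to the invertible diagonal rescaling $\Db^{n+1}$, the Wronskian at the origin of a basis of $V_{p,\Lambda}$, which is nonzero under the standing assumption of distinct nonzero $\lambda_j$), these vectors span the whole ambient space and the matrix identity $\Db\,\Cb^{[n]}\,\Db^{-1}=(\Cb^{[n+1]})^2$ follows.

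The main obstacle is the first step, namely justifying that $\Hb^{[n]}(z)$ is affine in $z^{-1}$ with identity leading matrix. This is already visible in the worked examples \eqref{esempio:H_2}--\eqref{esempio:H_3} and is in fact forced by the minimal structural definition \eqref{eq:Hlambda1}--\eqref{eq:Hlambda2}: the Taylor block $\Tb_p(z)$ has precisely this form, and the remaining block entries are uniquely determined as constants in $z$ by the exponential conditions on $\Hb^*_{p,\Lambda}$. Once this structural fact is accepted, the proof is the short algebraic manipulation above.
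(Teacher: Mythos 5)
Your proof is correct, but it takes a genuinely different route from the paper's. Both arguments rest on the same structural fact that the symbol is affine in $z^{-1}$ with identity leading coefficient (the paper also assumes this without proof, writing $\Hb^{[n]}(z)=z^{-1}\Ib+\Hb^{[n]}_0$), and both expand $\Hb^{[n+1]}(-z)\Hb^{[n+1]}(z)$ into a $z^{-2}$ term plus a constant matrix --- incidentally, your expansion $-z^{-2}\Ib+(\Cb^{[n+1]})^2$ carries the correct sign, while the paper's intermediate display \eqref{eq:rep_H_n+1^2} has a sign slip that is silently corrected in its final step $\Kb=-\Db$. From there the paths diverge. The paper shows that $\Hb^{[n+1]}(-z)\Hb^{[n+1]}(z)\Db$ annihilates the subsampled data $\cb^{[n]}(z^2)$ via \eqref{eq:clink}, then invokes the \emph{minimality} of the level-$n$ cancellation operator together with a support count to write this product as a constant left multiple $\Kb\,\Hb^{[n]}(z^2)$, and pins down $\Kb$ by comparing leading coefficients. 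You instead reduce the claim to the single matrix identity $\Db\,\Cb^{[n]}\,\Db^{-1}=(\Cb^{[n+1]})^2$ and verify it directly, reading the annihilation property as the one-step recursion $\vb^{[n]}_{f;k+1}=\Cb^{[n]}\vb^{[n]}_{f;k}$, combining it with the downsampling relation $\vb^{[n]}_{f;k}=\Db^{-1}\vb^{[n+1]}_{f;2k}$, and concluding because the vectors $\vb^{[n+1]}_{f;0}$, $f\in V_{p,\Lambda}$, span $\CC^{d+1}$. Your version is more elementary and self-contained: it avoids both the uniqueness/minimality theorem for $\cH^{[n]}_{p,\Lambda}$ from \cite{ContiCotroneiSauer15} and the tacit claim that the minimal annihilator of the upsampled data is exactly $\Hb^{[n]}(z^2)$ with support $[-2,0]$; its only extra ingredient is the nondegeneracy of the Hermite evaluation map at the origin, which you correctly justify by the Wronskian of a fundamental system under the standing assumption that the $\pm\lambda_j$ are distinct and nonzero. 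The paper's argument is shorter once the minimality machinery is granted, and it makes the role of minimality of the cancellation operator explicit, but yours is the more transparent verification.
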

\begin{proof}
Let $\cb^{[n]}(z):=\vb^{[n]}_f(z)$ for $f\in V_{p,\Lambda}$. 
We want to connect $\Hb^{[n+1]}(-z)\Hb^{[n+1]}(z)$ and $\Hb^{[n]}(z^2)$
through the identity \eqref{eq:clink},
that is
\begin{equation*}
\cb^{[n]}(z^2)=\frac 12 \Db^{-1}\left(\cb^{[n+1]}(z) +\cb^{[n+1]}(-z) \right).
\end{equation*}
Using \eqref{eq:AnnOp}, we have
\begin{eqnarray}\label{eq:Ann_n+1}
\Hb^{[n+1]}(-z)\Hb^{[n+1]}(z)\cb^{[n+1]}(z)
& =  & 0,\\\label{eq:Ann_n+1-}
\Hb^{[n+1]}(z)\Hb^{[n+1]}(-z)\cb^{[n+1]}(-z)
& =  & 0.
\end{eqnarray}
Since $\Hb^{[n]}(z)=z^{-1}\Ib+\Hb_0^{[n]}$, we get
\begin{equation}\label{eq:rep_H_n+1^2}
\Hb^{[n+1]}(-z)\Hb^{[n+1]}(z)
 = 
z^{-2}\Ib-(\nHb{n+1}_0)^2
\end{equation}
and, therefore,
$\Hb^{[n+1]}(-z)\Hb^{[n+1]}(z)=\Hb^{[n+1]}(z)\Hb^{[n+1]}(-z)$.
This gives us together with \eqref{eq:Ann_n+1-} and \eqref{eq:Ann_n+1} that
\begin{equation*}
\Hb^{[n+1]}(-z)\Hb^{[n+1]}(z)\big(\cb^{[n+1]}(z) +\cb^{[n+1]}(-z)\big)
= 0.
\end{equation*}
Using the identity \eqref{eq:clink}, we get
\begin{equation*}
\Hb^{[n+1]}(-z)\Hb^{[n+1]}(z)\Db\cb^{[n]}(z^2)= 0.
\end{equation*}
Thus, $\Hb^{[n+1]}(-z)\Hb^{[n+1]}(z)\Db$ is the symbol of a
cancellation operator for $\cb^{[n]}(z^2)$.
From \eqref{eq:AnnOp} we know that $\Hb^{[n]}(z^2)$ is the symbol of the minimal level-$n$ cancellation
operator, and, since $\supp \Hb^{[n+1]}(-z)\Hb^{[n+1]}(z) =[-2,0] =
\supp \Hb^{[n]}(z^2)$, there exists a constant matrix $\Kb$ such that
\begin{equation*}
\Kb\Hb^{[n]}(z^2)
=
\Hb^{[n+1]}(-z)\Hb^{[n+1]}(z)\Db.
\end{equation*}
Comparison of \eqref{eq:rep_H_n+1^2} with $\Hb^{[n]}(z^2)=z^{-2}\Ib+\Hb_0^{[n]}$ gives us that $\Kb=-\Db$, which proves the proposition.
\end{proof}

\section{A family of interpolatory Hermite wavelets}\label{sec:HermiteInterp}
We now derive a biorthogonal multiwavelet filter bank  based on the polynomial
and exponential reproducing Hermite subdivision scheme proposed  in \cite{ContiCotroneiSauer16}, whose construction is briefly recalled.

Such a scheme has been realized by proving the existence and uniqueness of the solution to  the Hermite interpolation problem 
 $$
 f^{(j)} (\epsilon) = y^j_{\epsilon}, \qquad j=0,\dots,d, \quad
 \epsilon \in \{ 0,1\},
 $$
 where $\yb_\varepsilon = \left( y^j_{\varepsilon} \,:\; j = 0,\dots,d
 \right)$ are given vectors of data,
in $V_{d+p+1,\Lambda}$ over the interval $[0,1]$.  
An explicit form of  the basis function vectors 
$\hb^{\epsilon}_{\Lambda}=[h^{\epsilon}_{0,\Lambda},\cdots, h^{\epsilon}_{d,\Lambda}]^T$, $\epsilon \in \{0,1\}$, has been given in \cite{ContiCotroneiSauer16}.

The local Hermite interpolant of a generic  vector-valued sequence of data $\fb_n$ 
$$
q^\alpha (\fb_n) (x) = \sum_{\epsilon \in \{0,1\}}
\fb_n^T (\alpha + \epsilon) \Db^n \hb^{\epsilon}_{2^{-n}\Lambda} ( 2^n x - \alpha )$$
over the interval $
\left[\frac {\alpha}{2^n}, \frac{\alpha+1}{2^n}\right]
$
is then evaluated at the midpoints, producing an Hermite subdivision  scheme whose 
$n$-th level mask $\Ab^{[n]}$ is given by
$$
\Ab^{[n]}(1)=\Db  \Nb_{0,2^{-n}\Lambda}\left(\frac 12 \right),
\quad  
\Ab^{[n]}(0)=\Db,
\quad
\Ab^{[n]}(-1)=\Db \Nb_{1,2^{-n}\Lambda}\left(\frac 12\right)
$$
with
$\Nb_{\epsilon,2^{-n}\Lambda}(y) := \left[ ( \hb_{\epsilon,2^{-n}\Lambda}^j )^{(k)}(y) \,:\, k,j =
  0,\dots,d \right]$, $ y\in [0,1].
  $

It is worthwhile to observe that, in the limit, such mask coincides with  the mask of the
Hermite  B-splines of degree $2d+1$.

Let us now restrict to the case $p=0$ and $r=1$.
The basis functions produce, for each $n$, a function vector 
supported on $[-1,1]$, which satisfies a level-dependent refinement equation as in (\ref{eq:primal_2_scale})  with coefficients 
given by the mask  $\Ab^{[n]}$. Such vector, at the level $n=0$, has components   explicitly given by:
%
%
%
%
%
%
%
%
	
{\scriptsize	
$$
\begin{array}{l}
\phi_0^{[0]}(x)=\left\{\begin{array}{l}
 \left( x+1 \right) ^{3} \left( 6\,{x}^{2}-3\,x+1 \right) , \, x\in [-1,0]\\ \noalign{\medskip}
 -6\,{x}^{5}+15\,{x}^{4}-10\,{x}^{3}+1 , \quad\quad x\in [0,1]
\end{array}\right.
\\ \noalign{\medskip}
\phi_1^{[0]}(x)=\left\{\begin{array}{ll}
- \left( x+1 \right) ^{3}x \left( 3\,x-1 \right), \quad\quad x\in [-1,0]\\ \noalign{\medskip}
{x}^{3} \left( 3\,{x}^{2}-7\,x+4 \right) C(\lambda)
-{\frac {{x}^{3} \left( {\lambda}^{2}{x}^{2}-2\,{\lambda}^{2}x+{
			\lambda}^{2}+12\,{x}^{2}-30\,x+20 \right) S ( \lambda
		) }{2\lambda}}+{\frac {S( \lambda\,x) }{\lambda
	}}
 , \quad\quad x\in [0,1]
\end{array}\right.
\\ \noalign{\medskip}
\phi_2^{[0]}(x)=\left\{\begin{array}{ll}
\frac12 {\left( x+1 \right) ^{3}{x}^{2}} , \quad\quad x\in [-1,0]\\  \noalign{\medskip}
-{\frac {{x}^{3} \left( {\lambda}^{2}{x}^{2}-2\,{\lambda}^{2}x+{
			\lambda}^{2}+12\,{x}^{2}-30\,x+20 \right) C( \lambda
		) }{{\lambda}^{2}}}+{\frac {{x}^{3} \left( 3\,{x}^{2}-7\,x+4 \right) S
		( \lambda) }{\lambda}}\\ \noalign{\medskip}
\hspace{5cm} +{\frac {C ( \lambda\,x) }{{
			2\lambda}^{2}}} +{\frac {6\,{x}^{5}-15\,{x}^{4}+10
		\,{x}^{3}-1}{{\lambda}^{2}}}
	, \quad\quad x\in [0,1]
	\end{array}\right.
\end{array}$$
}
where we have set  $C(t):=\cosh(t)$ and $S(t):=\sinh(t)$.
The substitution $\lambda \hookrightarrow \lambda 2^{-n}$ gives the function at a generic level $n$. In Fig. \ref{fig:ExpHermite}
the components of ${\bPhi}^{[0]}$ are shown, corresponding to two different values of $\lambda$.

\begin{figure}
	\begin{center}
		\includegraphics[width=2.5cm]{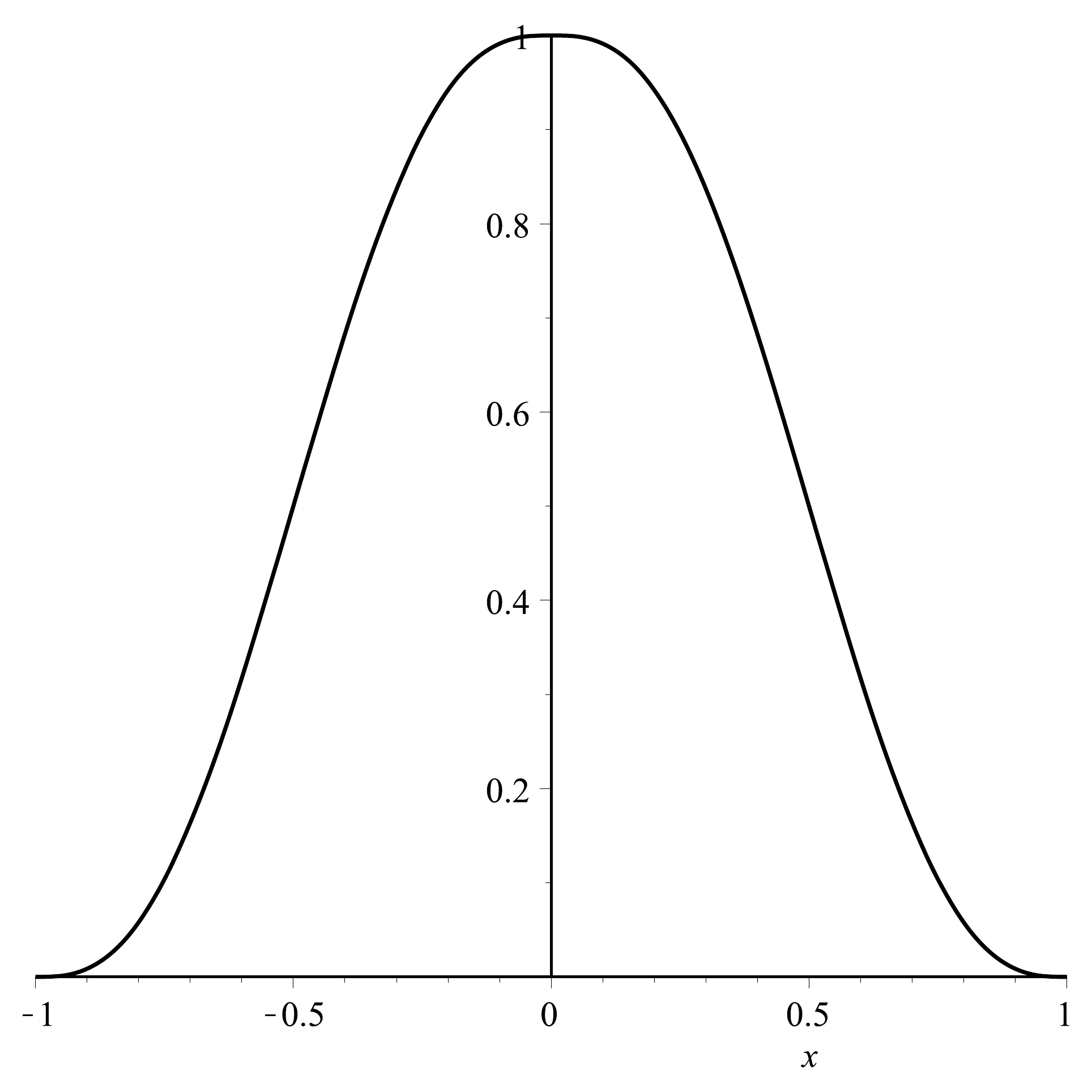}
			\includegraphics[width=2.5cm]{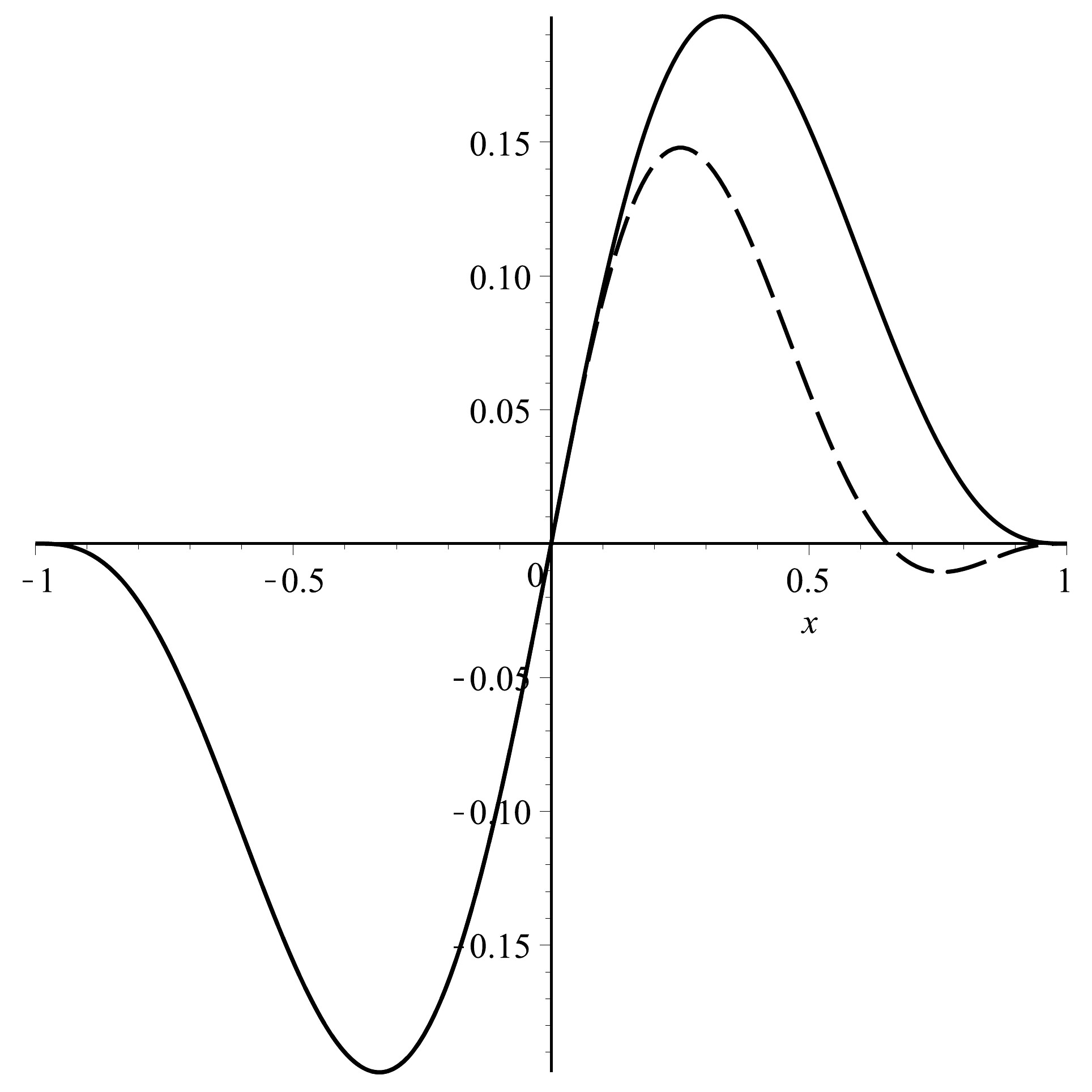}
				\includegraphics[width=2.5cm]{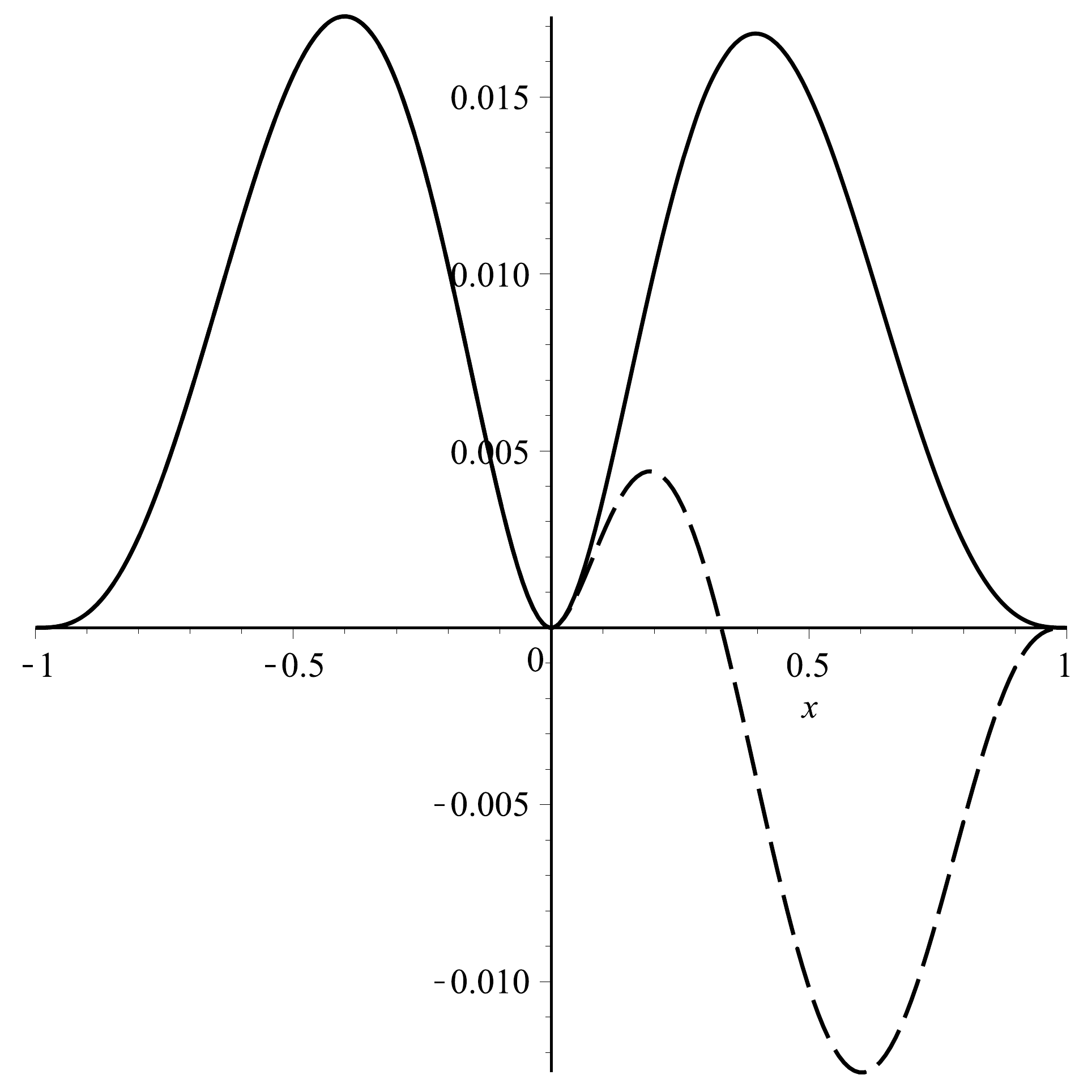}
		\caption{The three components of the exponential Hermite multi-scaling function, case $p=0$, $r=1$,  $n=0$, for   $\lambda=2$ 
			(solid line) and
		$\lambda=4$  (dashed line)}\label{fig:ExpHermite}
	\end{center}
\end{figure}

The corresponding low-pass decomposition mask has elements
$$
\Ab_{-1}:=\Ab^{[n]}_{-1} = 
\frac 1{64}
\left[ \begin {array}{ccc} 32&-10&1\\\noalign{\medskip}60&-14&1\\
\noalign{\medskip}0&24&-4\end {array} \right],\quad 
\Ab_0:=\Ab^{[n]}_{0} = 
\Db,
$$
$$
\Ab^{[n]}_1\!=\!
{\scriptsize \frac {1}{256}
\left[ \begin {array}{ccc} 
128 &
32\,\frac{2\,S(\frac{\lambda}{2})-S(\lambda)}{\lambda}+10\,C(\lambda)-\lambda\,S(\lambda)&
32\frac{2\,C(\frac{\lambda}{2})-C(\lambda)-1}{\lambda^2}+\frac{10\,S(\lambda)}{\lambda}-C(\lambda)
\\
\noalign{\medskip}
-240 &
\frac{32\,C(\frac{\lambda}{2})-60\,S(\lambda)}{\lambda}+14\,C(\lambda)-\lambda\,S(\lambda)&
60\,\frac{1-C(\lambda)}{\lambda^2}+\frac{32\,S(\frac{\lambda}{2})+14\,S(\lambda)}{\lambda}-C(\lambda)
\\
\noalign{\medskip}
0 &
12\lambda\,S(\frac{\lambda}{2})-18\,C(\lambda)+3\lambda\,S(\lambda) &
12\lambda\,C(\frac{\lambda}{2})-18\,S(\lambda)+3\lambda\,C(\lambda)   
\end {array} \right]},
$$
%
while the wavelet analysis filter taps are
$$\widetilde\Bb^{[n]}_k=(-1)^{(1-k)} \Db^{-1}(\Ab_{1-k}^{[n]})^T.$$

%
Their symbols admits the factorizations (\ref{eq:factSA}) and (\ref{eq:factorBT}), respectively, with respect to the cancellation operator (\ref{esempio:H_2}).

The limit functions
of the given example ($p=0$, $r=1$) result in the \emph{Hermite
quintic B-splines}, whose  connection with 
 multiwavelets  has already been widely studied for example in
\cite{Shumilov,Strela1995}. 

The corresponding filter has the symbol
$$\Ab(z)=\frac 1{64z}\left[ \begin {array}{ccc} 32\, \left( z+1 \right) ^{2}&10\, \left( z^2
-1 \right)  &{z}^{2}+1\\ \noalign{\medskip}-60\,
\left( z^2-1 \right)  &-2\left(\,{z}^{2}-16\,z+7\right)&-{z}^{2
}+1\\ \noalign{\medskip}0&-24\, \left( z^2-1 \right)  
 &-4\left(\,{z}^{2}-4\,z+1\right)
 \end{array} \right].
$$
Since, as already mentioned,  in the limit the annihilators $\cH_{p,2^{-n}\Lambda}$
reduce to the Taylor operator $\cT_d$,  we have
the factorization $\Tb(z)\Ab(z)=\Rb(z)\Tb(z^2)$, where
$$\Rb(z)=\frac 1{64}\left[ \begin {array}{ccc} 4(8-7\,z)&2(6\,z-5)&1\\ \noalign{\medskip}
60\,(1-z)&2\,(11z-7)&3\,z+1\\ \noalign{\medskip}0&24\,(1-z)&4\,(5z-1)
\end {array} \right] 
$$
and
$$\Tb(z)=\left[ \begin {array}{ccc} z^{-1}-1&-1&-\frac 12\\ \noalign{\medskip}
0&z^{-1}-1&-1\\ \noalign{\medskip}0&0&z^{-1}-1
\end {array} \right] .
$$
The symbol of the corresponding high-pass filter
of the decomposition, as constructed in Section~\ref{sec:MRA_fact}, is given by
$$
\tBb(z)=
\frac 1{16z^2}\left[ \begin {array}{ccc} -8\, \left( z-1 \right) ^{2}&-5\, \left( z^2
-1 \right) &-{z}^{2}-1\\ \noalign{\medskip}15\,
\left( z^2-1 \right)   &7\,{z}^{2}+16\,z+7&{z}^{2}-1
\\ \noalign{\medskip}0&12\, \left( z^2-1 \right)   &4(
\,{z}^{2}+4\,z+1)\end {array} \right] 
$$
and it is verified that it satisfies the factorization $\tBb(z)=\Sb(z)\Tb(z)$ with
$$
\Sb(z)=\frac 1{32z}\left[ \begin {array}{ccc} 16\,(z-1)&2(5-3z)&-2\\ \noalign{\medskip}
-30\,(z+1)&2\,(8z+7)&-3\,z-2\\ \noalign{\medskip}0&-24\,(z+1)&8\,(2z+1)
\end {array} \right] .
$$
We remark that such filters have been obtained with a  completely different approach than others in literature \cite{Shumilov,Strela1995}. Furthermore, the related factorization issues have never been studied before.

\section*{Conclusion}\label{sec:conclusion}
In this paper we have presented a Hermite-type multiwavelet system satisfying the
vanishing moment property with respect to elements in the space spanned by exponentials and polynomials. These systems  naturally generate MRAs which differ from the classical ones, in the sense that they are of nonstationary type, and the decomposition-reconstruction rules change accordingly to the level. In addition, for such kind of Hermite multiwavelets some nice results connected to the factorization of the corresponding filter symbol can be derived, exploiting their connection with Hermite subdivision. An example of such an Hermite multiwavelet system has been explicitly described. It includes,  as a particular case, 
the well-known finite element multiwavelets, which possess only polynomial vanishing moment properties and whose factorization issues have never been studied before.
Future researches include the application  of such  filter bank systems to some specific signal processing problems, where Hermite data are available (for example in problems of motion control) and where such data exhibit not just polynomial but also transcendental features.

\section*{References}

\end{document}